% 4/18/2020-5/4/2020, 6/1/2020-6/5/2020
\documentclass[a4paper,11pt]{amsart}
\usepackage{lmodern}
\usepackage[T1]{fontenc}
\usepackage{amsmath, amsthm, amssymb, mathrsfs, longtable, multirow}
\usepackage[inline]{enumitem}
\usepackage{graphicx}

\title%
[]
{
An extension of Pizzetti's formula associated with the Dunkl operators
}
\keywords{Pizzetti's formula, harmonic polynomials, Dunkl operators}
\subjclass[2010]{33C52, 32C55, 42C10}
\author{Nobukazu Shimeno}
\email{shimeno@kwansei.ac.jp}
\address{School of Science and Technology, Kwansei Gakuin University, 
2-1 Gakuen, Sanda, Hyogo 669-1337, Japan}
\author{Naoya Tani}
\email{}
\address{ Graduate School of Science and Technology, Kwansei Gakuin University, 
2-1 Gakuen, Sanda, Hyogo 669-1337, Japan}

\dedicatory{To the memory of  Professor~Takaaki~Nomura} 
% In Memoriam: Takaaki~Nomura
\date{}

\newcommand{\D}{\mathscr{D}}
\renewcommand{\P}{\mathscr{P}}
\renewcommand{\H}{\mathscr{H}}
\newcommand{\kk}{\kappa}

\numberwithin{equation}{section}
\theoremstyle{plain}
 \newtheorem{thm}{Theorem}[section]
 \newtheorem{cor}[thm]{Corollary}
 
 \newtheorem{prop}[thm]{Proposition}
\theoremstyle{definition}

\theoremstyle{remark}
 \newtheorem{rem}[thm]{Remark}

\setlength{\textwidth}{\paperwidth}
\addtolength{\textwidth}{-2.4in}
\setlength{\textheight}{\paperheight}
\addtolength{\textheight}{-2.8in}
\setlength{\headsep}{20pt}
\calclayout

\allowdisplaybreaks[3]

\begin{document}
\maketitle

\begin{abstract}
We give an extension of Pizzetti's formula associated with the Dunkl operators. 
It gives an explicit formula for the Dunkl
 inner product of an arbitrary function and a homogeneous Dunkl  harmonic polynomial on the unit sphere. 
\end{abstract}

\section{Introduction}

Dunkl analysis that was initiated by C.~Dunkl is a study of the function theory for the Dunkl operators and 
the Dunkl Laplacian. It is a deformation of calculus of several variables for partial derivatives and 
the Euclidean Laplacian, and analogues of classical results have been developed. 

In this paper we study analogues in Dunkl analysis 
of classical Pizzetti's formula for spherical mean \cite{pizzetti, poritsky, 
lysik} and its extension given by Bezubik, D\c{a}browska, and Strasburger 
\cite[Corollary~2.1]{BDS} and Estrada~\cite{est2}.
The main result is an extended Pizzetti's formula associated with the Dunkl operators 
(Theorem~\ref{thm:expiz}):
\begin{equation*}
\dfrac{1}{\omega_{\kappa, d}} \int_{S^{d-1}} q(y) f(ry) h_{\kappa}^{2}(y) d\sigma(y)
 =\sum_{n=0}^N \frac{(q(\mathscr{D}) \Delta_{\kappa}^{n} f)(0)}{n!\,(\lambda_{\kk}+1)_{m+n}} 
 \left(\dfrac{r}{2}\right)^{m+2n}+o(r^{m+2N}) \quad (r\to +0)
\end{equation*}
for a homogeneous Dunkl harmonic polynomial $q$ of degree $m$ and 
a smooth function $f$ on $\mathbb{R}^d$. Here $\kk$ is a parameter associated with a finite reflection group. 
For $\kk=0$ the above formula gives the classical result (\cite{BDS, est2}) with $\omega_{0,d}$ is the surface 
area of the unit sphere $S^{d-1}$, $h_0(y)=1$, $\D$ for $\kk=0$ is the gradient, $\Delta_0$ is the Euclidean Laplacian, 
and 
$\lambda_0=d/2-1$. In particular, the case of $\kk=0$ and $q(y)=1$ gives classical Pizzetti's formula. 
See the next section for explanation of the notions undefined here. 

We prove our extended Pizzetti's formula by using the canonical decomposition of a homogeneous polynomial 
with respect to the Dunkl harmonic polynomials. Pizzetti's formula follows as a corollary of 
the extended Pizzetti's formula. Pizzetti's formula associated with the Dunkl Laplacian was established by Mejjaoli,  Trim\`eche~\cite[Theorem~4.17]{MT} 
and Salem, Touahri~\cite[Theorem~2.4]{ST}. Our proof is different from theirs. 
We also deduce the extended Pizzetti's formula from 
Pizzetti's formula and Hobson's formula~\cite{shimeno}. 

As a corollary of our extended Pizzetti's formula, we obtained the Funk-Hecke formula associated 
with the Dunkl operators, which was originally proved by Xu~\cite{xu} by a different method. 

\section{Notation and preliminaries}

In this section we review  the Dunkl operators and Dunkl $h$-harmonics. 
We refer \cite{DaX, DX, R} for details. 

Let $d$ be a positive integer.  
Let $\langle \,,\,\rangle$ be the standard inner product on $\mathbb{R}^d$ and put $||x||=\langle x, x\rangle^{1/2}$ 
for $x\in\mathbb{R}^d$.  Let 
$R\subset \mathbb{R}^d$ be a reduced root system, which is not necessarily crystallographic. 
For $\alpha\in R$, we write 
$r_\alpha$ for the reflection with respect to the hyperplane $\alpha^\perp$. 
Let $G$ denote the finite reflection group generated by $\{r_\alpha\,:\,\alpha\in R\}$. 
We fix a positive system $R_+\subset R$.

Let $\mathscr{P}=\mathscr{P}(\mathbb{R}^d)$ denote the space of polynomials on $\mathbb{R}^d$ with 
real coefficients. 
For a non-negative integer $m$, let $\mathscr{P}_m$ denote the subspace of $\mathscr{P}$ consisting of the 
homogeneous 
polynomials of degree $m$. 

Let $\kk:R\rightarrow \mathbb{R}_{\geq 0},\,\alpha\mapsto \kk_\alpha$ be a $G$-invariant 
function on $R$. We call $\kk$ a (non-negative) multiplicity function. 
We define
\begin{equation}\label{eqn:const}
\lambda_\kk=\frac{d}{2}-1+\sum_{\alpha\in R_+}\kk_\alpha.
\end{equation}

For $\xi\in\mathbb{R}^d\setminus\{0\}$, let 
$\partial_\xi
$ denote the directional derivative corresponding to  $\xi$ and define
the Dunkl operator $\D_\xi$ by
\begin{equation}\label{eqn:dunkl}
\D_\xi f(x)=\partial_\xi f(x)+\sum_{\alpha\in R_+}\kk_\alpha\langle\alpha,\xi\rangle\frac{f(x)-f(r_\alpha x)}
{\langle\alpha,x\rangle}. 
\end{equation}
The Dunkl operators satisfy 
$\D_\xi\D_\eta=\D_\eta\D_\xi$ for all $\xi,\,\eta\in\mathbb{R}^d$. 
Let $\{e_1,\dots,e_d\}$ be the standard orthonormal basis of $\mathbb{R}^d$. 
We write $\partial_j=\partial_{e_j},\,\,\D_j=\D_{e_j}$. The 
Dunkl Laplacian $\Delta_\kk$ is defined by
\begin{equation}
\Delta_\kk=\sum_{j=1}^d \D_{j}^2.
\end{equation}
The Dunkl operators are homogeneous of degree $-1$ and the Dunkl Laplacian $\Delta_\kk$ is 
homogeneous of degree $-2$. 

Put $\D=(\D_1,\dots,\D_d)$. 
For $p,\,q\in\mathscr{P}$ define $\langle p, q\rangle_\kk=(p(\D)q)(0)$. Then $\langle\,\cdot\,,\,\cdot\,\rangle_\kk$ 
gives a non-degenerate symmetric bilinear form on $\mathscr{P}$. If $p\in\P_l,\,q\in\P_m$ and $l\not=m$, then 
$\langle p,q\rangle_\kk=0$. 

Define $\mathscr{H}_{\kappa} := \left\{ p \in \mathscr{P} \,  ; \, \Delta_{\kappa} p = 0  \right\}$ and 
$\mathscr{H}_{\kappa, m} = \mathscr{H}_{\kappa} \cap \mathscr{P}_{m}$. We call an element of $\H_\kk$ 
an $h$-harmonic polynomial or 
Dunkl harmonic polynomial. 

We recall the canonical decomposition of a homogeneous polynomial. Define the shifted factorial 
\begin{align*}
& (a)_0=1, \\
& (a)_n=a(a+1)\cdots (a+n-1), 
\end{align*}
where $a$ is a complex number and $n$ is a positive integer. 
For $P\in\mathscr{P}_n$ define
\[
\text{proj}_{\kk,n} P(x)= \sum_{j=0}^{[n/2]} \dfrac{1}{4^{j}j!\, (-\lambda_{\kk}-n+1 )_{j}} 
||x||^{2j} \Delta_{\kappa}^{j} P(x).
\]
Here $[n/2]$ means the largest integer with $[n/2]\leq n/2$. 

\begin{thm}[Canonical decomposition,  {\cite[Theorem~1.11]{Dunkl3},  \cite[Theorem~7.1.15]{DX}}]\label{thm:cd}
We have an orthogonal direct sum decomposition
\begin{equation*}
\mathscr{P}_{n} = \mathscr{H}_{\kappa, n} \oplus ||x||^{2}\mathscr{H}_{\kappa, n-2} \oplus \cdots 
\oplus ||x||^{2[n/2]}\mathscr{H}_{\kappa, n-2[n/2]} .
\end{equation*}
The decomposition of $p\in\mathscr{P}_n$ is given by 
\[
 p(x)=\sum_{i=0}^{[n/2]}||x||^{2i}p_{n-2i}(x)\quad (p_{n-2i}\in\mathscr{H}_{\kk,n-2i}) 
\]
with
\[
p_{n-2i}(x)=\frac{1}{4^ii!\,(\lambda_{\kk}+1+n-2i)_i}\mathrm{proj}_{\kk,n-2i}\Delta_{\kk}^i p(x) . 
\]
\end{thm}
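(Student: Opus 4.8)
The plan is to exploit the $\mathfrak{sl}_2$-structure attached to $\Delta_\kk$ together with the non-degeneracy of the pairing $\langle\,\cdot\,,\,\cdot\,\rangle_\kk$. First I would record the commutation relation
\[
\Delta_\kk(||x||^2 f)=||x||^2\,\Delta_\kk f+4\bigl(E+\lambda_\kk+1\bigr)f\qquad(f\in\mathscr{P}),
\]
where $E=\sum_{j=1}^d x_j\partial_j$ is the Euler operator; this is the one genuinely Dunkl-specific input, obtained by a direct computation from the definition \eqref{eqn:dunkl} in which the reflection terms are the only delicate bookkeeping. Since $E$ acts on $\P_m$ as multiplication by $m$, an immediate induction gives, for $g\in\H_{\kk,m}$ and $0\le j\le i$,
\[
\Delta_\kk^{\,j}\bigl(||x||^{2i}g\bigr)=4^{\,j}\,\frac{i!}{(i-j)!}\,(\lambda_\kk+m+i-j+1)_j\,||x||^{2(i-j)}g ,
\]
while $\Delta_\kk^{\,j}(||x||^{2i}g)=0$ for $j>i$; in particular the coefficient at $j=i$ is $4^i i!\,(\lambda_\kk+m+1)_i$.

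Next I would settle orthogonality and directness. Because $p\mapsto p(\D)$ is an algebra homomorphism of the commutative algebra $\mathscr{P}$ into the algebra generated by the (commuting) Dunkl operators and $||x||^2(\D)=\Delta_\kk$, multiplication by $||x||^2$ is adjoint to $\Delta_\kk$:
\[
\langle ||x||^2 p,\,q\rangle_\kk=\langle p,\,\Delta_\kk q\rangle_\kk\qquad(p,q\in\mathscr{P}).
\]
For $i\ne j$, say $i<j$, repeatedly moving all $j$ powers of $||x||^2$ from the right argument onto the left gives $\langle ||x||^{2i}h,||x||^{2j}g\rangle_\kk=\langle\Delta_\kk^{\,j}(||x||^{2i}h),g\rangle_\kk=0$ for $h\in\H_{\kk,n-2i}$ and $g\in\H_{\kk,n-2j}$, since $\Delta_\kk^{\,j}(||x||^{2i}h)=0$ when $j>i$; this is the asserted orthogonality. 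For the spanning I would prove that $\Delta_\kk\colon\P_n\to\P_{n-2}$ is onto: the orthogonal complement of its image in $\P_{n-2}$ (the form being non-degenerate on each $\P_m$, as the graded pieces are mutually orthogonal) consists of those $p$ with $\langle ||x||^2 p,q\rangle_\kk=0$ for all $q\in\P_n$, and non-degeneracy on $\P_n$ together with injectivity of multiplication by $||x||^2$ forces $p=0$. Hence $\dim\H_{\kk,n}=\dim\P_n-\dim\P_{n-2}$ and $\P_n=\H_{\kk,n}\oplus||x||^2\P_{n-2}$; iterating yields the full orthogonal direct sum, the dimensions telescoping to $\dim\P_n$.

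Finally I would read off the component formula. Writing $p=\sum_k||x||^{2k}p_{n-2k}$ and applying $\Delta_\kk^i$, the displayed evaluation shows that the purely harmonic part of $\Delta_\kk^i p\in\P_{n-2i}$ is exactly the $k=i$ term $4^i i!\,(\lambda_\kk+1+n-2i)_i\,p_{n-2i}$. It then remains to identify $\mathrm{proj}_{\kk,m}$ with the projection of $\P_m$ onto $\H_{\kk,m}$, i.e. to check that it reproduces the top harmonic summand and annihilates $||x||^{2s}\H_{\kk,m-2s}$ for $s\ge1$. Substituting the formula for $\Delta_\kk^{\,j}(||x||^{2s}g)$ into the definition of $\mathrm{proj}_{\kk,m}$ reduces this to the scalar identity
\[
\sum_{j=0}^{s}(-1)^j\binom{s}{j}\frac{(s-m-\lambda_\kk)_j}{(1-m-\lambda_\kk)_j}={}_2F_1(-s,\,s-m-\lambda_\kk;\,1-m-\lambda_\kk;\,1)=\frac{(1-s)_s}{(1-m-\lambda_\kk)_s},
\]
which is Chu--Vandermonde; the right-hand side equals $1$ for $s=0$ and $0$ for $s\ge1$. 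This hypergeometric evaluation is the main technical obstacle, the remaining steps being essentially formal. Combining it with the previous paragraph yields $\mathrm{proj}_{\kk,n-2i}\Delta_\kk^i p=4^i i!\,(\lambda_\kk+1+n-2i)_i\,p_{n-2i}$, which is the stated expression for $p_{n-2i}$.
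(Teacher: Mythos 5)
Your overall strategy is sound and most of the details check out: the commutation relation $\Delta_\kk(||x||^2f)=||x||^2\Delta_\kk f+4(E+\lambda_\kk+1)f$ (with $E$ the Euler operator) is the correct Dunkl analogue of the classical $\mathfrak{sl}_2$ relation; the induced formula for $\Delta_\kk^{\,j}(||x||^{2i}g)$, $g\in\H_{\kk,m}$, is right; the adjointness $\langle ||x||^2p,q\rangle_\kk=\langle p,\Delta_\kk q\rangle_\kk$ does follow from the homomorphism property of $p\mapsto p(\D)$ (and its use in the other direction is licensed by the symmetry of the form, which the paper grants); and your Chu--Vandermonde evaluation correctly shows that $\mathrm{proj}_{\kk,m}$ fixes $\H_{\kk,m}$ and annihilates $||x||^{2s}\H_{\kk,m-2s}$ for $s\ge1$, which yields the stated formula for $p_{n-2i}$. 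Note that the paper itself does not prove this theorem at all --- it is quoted with references to Dunkl and to Dunkl--Xu --- so you are supplying a proof where the paper has only a citation.

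There is, however, one genuine gap, at the sentence ``Hence $\dim\H_{\kk,n}=\dim\P_n-\dim\P_{n-2}$ and $\P_n=\H_{\kk,n}\oplus||x||^2\P_{n-2}$.'' Surjectivity of $\Delta_\kk\colon\P_n\to\P_{n-2}$ plus rank--nullity gives the dimension count, and you have orthogonality of $\H_{\kk,n}$ and $||x||^2\P_{n-2}$, but for a bilinear form that is only known to be symmetric and non-degenerate (which is all the paper asserts about $\langle\cdot,\cdot\rangle_\kk$) these two facts do \emph{not} imply that the sum is direct: two mutually orthogonal subspaces of complementary dimensions can share nonzero isotropic vectors (in a hyperbolic plane, take both subspaces to be the same isotropic line). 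What has to be ruled out is precisely the crux of the theorem: that no nonzero Dunkl-harmonic polynomial is divisible by $||x||^2$. You can close this gap with tools you already have: if $||x||^2p\in\H_{\kk,n}$ with $p\in\P_{n-2}$, your commutation relation gives $0=\Delta_\kk(||x||^2p)=||x||^2\Delta_\kk p+4(\lambda_\kk+n-1)p$, and since $\lambda_\kk+n-1\ge \frac12>0$ (this is where non-negativity of $\kk$ enters), $p=-\frac{1}{4(\lambda_\kk+n-1)}\,||x||^2\Delta_\kk p$; iterating this, using your formula for $\Delta_\kk(||x||^{2j}q)$ and the positivity of the successive constants, shows $p$ is divisible by arbitrarily high powers of $||x||^2$, hence $p=0$. (Equivalently, one can check that $\Delta_\kk\circ||x||^2$ acts on $\P_{n-2}$ triangularly with positive diagonal with respect to the filtration by powers of $||x||^2$, or invoke positive definiteness of $\langle\cdot,\cdot\rangle_\kk$ for $\kk\ge0$ --- but the latter is a nontrivial theorem not stated in the paper.) With this point repaired, your argument is complete.
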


Let $h_\kk(x)$ denote the weight function defined by
\begin{equation}\label{eqn:wt}
h_\kappa(x)=\prod_{\alpha\in R_+}|\langle\alpha,x\rangle|^{\kk_\alpha}
\end{equation}

Let $S^{d-1}$ denote the unit sphere in $\mathbb{R}^d$ and 
$d\sigma$  the surface measure on $S^{d-1}$. Define
\begin{equation*}
\omega_{\kappa, d} = \int_{S^{d-1}} h_{\kappa}^2(y) d\sigma(y).
\end{equation*}

We recall the orthogonality relation for $h$-spherical harmonics.

\begin{thm}[{\cite[Theorem~3.1.2, Theorem~3.1.9]{DaX}}]\label{thm:orthogonality}
Suppose $p\in \mathscr{H}_{\kk,l}$ and $q\in\mathscr{H}_{\kk,m}$. 
Then  
\[
\dfrac{1}{\omega_{\kappa, d}} \int_{S^{d-1}} p(y)q(y)  h_{\kappa}^2(y) d\sigma(y)
= \dfrac{1}{2^{m}\,(\lambda_{\kk}+1)_{m}} 
\langle p,q\rangle_\kk.
\]
\end{thm}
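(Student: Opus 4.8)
The plan is to deduce the identity from the Gaussian (Fischer) representation of the bilinear form $\langle\,\cdot\,,\,\cdot\,\rangle_\kk$, which is the standard device linking the pairing $(p(\D)q)(0)$ to integration against the weight $h_\kk^2$. Concretely, I would take as the main input the known Dunkl identity
\begin{equation}\label{eq:fischer}
\langle p,q\rangle_\kk=(p(\D)q)(0)=c_\kk\int_{\mathbb{R}^d}\bigl(e^{-\Delta_\kk/2}p\bigr)(x)\,\bigl(e^{-\Delta_\kk/2}q\bigr)(x)\,e^{-\|x\|^2/2}h_\kk^2(x)\,dx,\qquad p,q\in\P,
\end{equation}
where $c_\kk=\bigl(\int_{\mathbb{R}^d}e^{-\|x\|^2/2}h_\kk^2(x)\,dx\bigr)^{-1}$ and $e^{-\Delta_\kk/2}=\sum_{j\ge0}\frac{(-1/2)^j}{j!}\Delta_\kk^j$ is a finite sum on each polynomial. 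This is the Dunkl analogue of the classical apolarity--Gaussian formula, and it rests on the skew-adjointness of the Dunkl operators with respect to $h_\kk^2\,dx$.

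The point of \eqref{eq:fischer} is that it trivializes on $h$-harmonics. If $p\in\H_{\kk,l}$ and $q\in\H_{\kk,m}$, then $\Delta_\kk p=\Delta_\kk q=0$, so $e^{-\Delta_\kk/2}p=p$ and $e^{-\Delta_\kk/2}q=q$, and \eqref{eq:fischer} collapses to $\langle p,q\rangle_\kk=c_\kk\int_{\mathbb{R}^d}p(x)q(x)e^{-\|x\|^2/2}h_\kk^2(x)\,dx$. I would then pass to polar coordinates $x=ry$ ($r>0$, $y\in S^{d-1}$), using $p(ry)q(ry)=r^{l+m}p(y)q(y)$, the homogeneity $h_\kk^2(ry)=r^{2\gamma}h_\kk^2(y)$ with $\gamma=\sum_{\alpha\in R_+}\kk_\alpha$, and $dx=r^{d-1}\,dr\,d\sigma(y)$. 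Since $2\gamma+d-1=2\lambda_\kk+1$, the integral factors as a radial Gamma integral times the spherical integral:
\[
\langle p,q\rangle_\kk=c_\kk\Bigl(\int_0^\infty r^{\,l+m+2\lambda_\kk+1}e^{-r^2/2}\,dr\Bigr)\int_{S^{d-1}}p(y)q(y)h_\kk^2(y)\,d\sigma(y),
\]
with the radial factor equal to $2^{(l+m)/2+\lambda_\kk}\Gamma\bigl(\tfrac{l+m}{2}+\lambda_\kk+1\bigr)\neq0$.

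From here the two cases follow by bookkeeping. When $l\neq m$ the right-hand side of the theorem vanishes, since $\langle p,q\rangle_\kk=0$ for homogeneous polynomials of distinct degrees (noted just after the definition of the form); as the radial factor is nonzero, the display forces $\int_{S^{d-1}}pq\,h_\kk^2\,d\sigma=0$, which matches the claim. When $l=m$ the radial factor is $2^{m+\lambda_\kk}\Gamma(m+\lambda_\kk+1)$. I fix the normalization $c_\kk\,2^{\lambda_\kk}\Gamma(\lambda_\kk+1)=\omega_{\kk,d}^{-1}$ by taking $p=q=1$ (so $\langle1,1\rangle_\kk=1$ and $\int_{S^{d-1}}h_\kk^2\,d\sigma=\omega_{\kk,d}$), and dividing the general-$m$ identity by this normalization converts the ratio of Gamma factors into $(\lambda_\kk+1)_m$ and the powers of $2$ into $2^m$, giving exactly $\frac{1}{\omega_{\kk,d}}\int_{S^{d-1}}pq\,h_\kk^2\,d\sigma=\frac{1}{2^m(\lambda_\kk+1)_m}\langle p,q\rangle_\kk$.

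The genuine obstacle is \eqref{eq:fischer} itself; everything after it is routine. Establishing it requires the skew-adjointness $\int_{\mathbb{R}^d}(\D_jf)g\,h_\kk^2\,dx=-\int_{\mathbb{R}^d}f(\D_jg)h_\kk^2\,dx$ (hence self-adjointness of $\Delta_\kk$), the control of how $e^{-\Delta_\kk/2}$ interacts with the Gaussian, and the Dunkl ``Macdonald--Mehta'' evaluation underlying $c_\kk$, together with convergence of the Gaussian integrals (guaranteed since $\kk\ge0$). If one wishes to bypass \eqref{eq:fischer}, an alternative route is a Green-type identity for $\Delta_\kk$ on the unit ball built from skew-adjointness, combined with the polar form $\Delta_\kk=\partial_r^2+\frac{2\lambda_\kk+1}{r}\partial_r+r^{-2}\Delta_{\kk,0}$ and the canonical decomposition (Theorem~\ref{thm:cd}) to reduce an arbitrary homogeneous $q$ to its $h$-harmonic component; self-adjointness of the spherical part $\Delta_{\kk,0}$ then yields the $l\neq m$ orthogonality, and its eigenvalues pin down the constant. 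In either approach the analytic heart is the skew-adjointness of the Dunkl operators and the computation of the normalizing constant.
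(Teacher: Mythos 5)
You should first note a structural point: the paper does not prove Theorem~\ref{thm:orthogonality} at all. It is a preliminary quoted verbatim from Dai--Xu \cite{DaX} (Theorems~3.1.2 and 3.1.9 there), used later only as a black box in the proof of Theorem~\ref{thm:pizzettid1}. So there is no internal proof to compare yours against; what can be judged is whether your argument is sound and how it relates to the proofs in the cited literature. On that score your outline is correct, and it is in fact essentially the standard proof of the cited result. Granting the Fischer--Gaussian identity, everything checks out: for Dunkl-harmonic $p,q$ the operator $e^{-\Delta_\kk/2}$ acts as the identity; polar coordinates with $2\gamma+d-1=2\lambda_\kk+1$ (where $\gamma=\sum_{\alpha\in R_+}\kk_\alpha$) split off the radial factor $2^{(l+m)/2+\lambda_\kk}\,\varGamma\bigl(\tfrac{l+m}{2}+\lambda_\kk+1\bigr)$, which is positive; the case $l\neq m$ then follows from $\langle p,q\rangle_\kk=0$ for homogeneous polynomials of distinct degrees; and your normalization step (specializing the polar-coordinate identity to $p=q=1$) correctly yields $c_\kk\,2^{\lambda_\kk}\varGamma(\lambda_\kk+1)=\omega_{\kk,d}^{-1}$, so the explicit Macdonald--Mehta evaluation of $c_\kk$ is never needed, only finiteness of the Gaussian integral (true for $\kk\geq 0$). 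The one genuine input, which you correctly identify as the heart of the matter, is the identity $\langle p,q\rangle_\kk=c_\kk\int_{\mathbb{R}^d}(e^{-\Delta_\kk/2}p)(e^{-\Delta_\kk/2}q)\,e^{-\|x\|^2/2}h_\kk^2\,dx$; this is a theorem of Dunkl, available in the sources the paper already cites (R\"osler's notes \cite{R}; see also \cite[\S 7.2]{DX}), and it is precisely the mechanism by which the quoted references prove Theorem~\ref{thm:orthogonality}. So your proposal does not so much diverge from the paper as supply the proof the paper chose to import; no circularity arises, since nothing you use depends on the paper's later results.
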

Note that in Theorem~\ref{thm:orthogonality} $\langle p,q\rangle_\kk=\delta_{lm}q(\D)p$. 
If $l<m$, then by Theorem~\ref{thm:cd} and Theorem~\ref{thm:orthogonality}, 
\[
\dfrac{1}{\omega_{\kappa, d}} \int_{S^{d-1}} p(y)q(y)  h_{\kappa}^2(y) d\sigma(y)
=0\quad (p\in \P_l,\,q\in \mathscr{H}_{\kk,m}).
\]

\section{Extended Pizzetti's formula}

In this section, we prove Pizzetti's formula and its extension associated with the Dunkl 
operators.  We start with the following mean value property for homogeneous polynomials. 

\begin{thm}[\cite{tani}]\label{thm:pizzettid1}
Let $p \in \mathscr{P}_{l}$ and $q \in \mathscr{H}_{\kappa, m}$. 
Assume  $l-m$ is a non-negative even integer and set $l-m = 2n$. Then
\begin{equation*}
\dfrac{1}{\omega_{\kappa, d}} \int_{S^{d-1}} q(y) p(y) h_{\kappa}^{2}(y) d\sigma(y)
 = \dfrac{1}{2^{m+2n}n!\,(\lambda_{\kk}+1)_{m+n}} 
q(\mathscr{D}) \Delta_{\kappa}^{n} p.
\end{equation*}
The integral on the left hand side of the above identity vanishes if $l-m$ is a negative or odd integer. 
\end{thm}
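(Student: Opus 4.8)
The plan is to expand $p$ via its canonical decomposition (Theorem~\ref{thm:cd}), restrict to the sphere, and integrate term by term against $q$, using the orthogonality relation (Theorem~\ref{thm:orthogonality}) to collapse the resulting sum to a single contribution.

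First I would write $p(x)=\sum_{i=0}^{[l/2]}\|x\|^{2i}p_{l-2i}(x)$ with $p_{l-2i}\in\mathscr{H}_{\kappa,l-2i}$ given explicitly by Theorem~\ref{thm:cd}. On $S^{d-1}$ we have $\|y\|^2=1$, so $p(y)=\sum_i p_{l-2i}(y)$ is a sum of $h$-harmonics of degrees $l,l-2,\dots$. Integrating $q(y)p(y)h_\kappa^2(y)$ and applying Theorem~\ref{thm:orthogonality} together with the remark after it, every term with $l-2i\neq m$ drops out. This already settles the vanishing claims: when $l-m$ is odd no index $i$ gives $l-2i=m$ (wrong parity), and when $l-m<0$ all degrees $l-2i\le l<m$, so the integral is zero in both cases.

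When $l-m=2n\ge 0$, only the index $i=n$ survives, giving $\frac{1}{\omega_{\kappa,d}}\int_{S^{d-1}}q\,p_m\,h_\kappa^2\,d\sigma$ with $p_m=\frac{1}{4^n n!\,(\lambda_\kappa+1+m)_n}\mathrm{proj}_{\kappa,m}\Delta_\kappa^n p$; since $q,p_m\in\mathscr{H}_{\kappa,m}$, Theorem~\ref{thm:orthogonality} converts this to $\frac{1}{2^m(\lambda_\kappa+1)_m}\,q(\mathscr{D})p_m$. The next step, which I expect to be the crux, is the identity $q(\mathscr{D})\,\mathrm{proj}_{\kappa,m}P=q(\mathscr{D})P$ for $P:=\Delta_\kappa^n p\in\mathscr{P}_m$ and any harmonic $q\in\mathscr{H}_{\kappa,m}$: since $\mathrm{proj}_{\kappa,m}P$ is exactly the $\mathscr{H}_{\kappa,m}$-component of $P$ (the $i=0$ term of Theorem~\ref{thm:cd}) and $q(\mathscr{D})P=\langle q,P\rangle_\kappa$ for degree-$m$ homogeneous polynomials, the $\langle\cdot,\cdot\rangle_\kappa$-orthogonality of the canonical decomposition annihilates the lower summands $\|x\|^{2i}P_{m-2i}$ with $i\ge 1$.

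Granting this, $q(\mathscr{D})p_m=\frac{1}{4^n n!\,(\lambda_\kappa+1+m)_n}\,q(\mathscr{D})\Delta_\kappa^n p$, and the final step is to assemble the constants through $2^m4^n=2^{m+2n}$ and $(\lambda_\kappa+1)_m(\lambda_\kappa+1+m)_n=(\lambda_\kappa+1)_{m+n}$, which reproduces the stated coefficient $\frac{1}{2^{m+2n}n!\,(\lambda_\kappa+1)_{m+n}}$. The only genuine subtlety is the harmonic-projection identity, where one must recognize that inserting $\mathrm{proj}_{\kappa,m}$ costs nothing precisely because $q$ is harmonic; everything else is orthogonality plus Pochhammer bookkeeping.
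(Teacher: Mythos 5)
Your proposal is correct and follows essentially the same route as the paper's proof: expand $p$ by the canonical decomposition (Theorem~\ref{thm:cd}), reduce the integral to the single term $\int_{S^{d-1}} q\,p_m\,h_\kappa^2\,d\sigma$, and finish with the orthogonality relation (Theorem~\ref{thm:orthogonality}) plus the relation between the spherical and $\langle\cdot,\cdot\rangle_\kk$ pairings. The only cosmetic difference is where the projection $\mathrm{proj}_{\kk,m}$ is discarded: you drop it under the algebraic pairing $q(\mathscr{D})\cdot$ using $\langle\cdot,\cdot\rangle_\kk$-orthogonality, while the paper drops it under the spherical integral before applying Theorem~\ref{thm:orthogonality}; both steps are justified by the same orthogonality facts.
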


\begin{proof}
By 
Theorem~\ref{thm:cd} and 
Theorem~\ref{thm:orthogonality}, 
\[
\dfrac{1}{\omega_{\kappa, d}} \int_{S^{d-1}} q(y) p(y) h_{\kappa}^{2}(y) d\sigma(y)=0, 
\]
if $l-m$ is a negative or odd integer. Otherwise, by Theorem~\ref{thm:cd} and Theorem~\ref{thm:orthogonality}, 
\begin{align*}
\dfrac{1}{\omega_{\kappa, d}} \int_{S^{d-1}} q(y) p(y) h_{\kappa}^{2}(y) d\sigma(y)
& =\dfrac{1}{\omega_{\kappa, d}} \int_{S^{d-1}} q(y) p_m(y) h_{\kappa}^{2}(y) d\sigma(y) \\
& =\dfrac{1}{\omega_{\kappa, d}}\frac{1}{4^n n!\,(\lambda_{\kk}+1+m)_n} \int_{S^{d-1}}q(y) \Delta_{\kk}^n p(y)d\sigma(y) \\
& =\frac{1}{4^n n!\,(\lambda_{\kk}+1+m)_n} \frac{1}{2^m (\lambda_{\kk}+1)_m}q(\D)\Delta_{\kk}^n p \\
& = \dfrac{1}{2^{m+2n}n!\,(\lambda_{\kk}+1)_{m+n}} 
q(\mathscr{D}) \Delta_{\kappa}^{n} p.
\end{align*}

\end{proof}

If  $p\in\mathscr{H}_{\kk,l}$ in Theorem~\ref{thm:pizzettid1}, we recover Theorem~\ref{thm:orthogonality}. 
If  $q=1$  in Theorem~\ref{thm:pizzettid1}, we get Pizzetti's formula for homogeneous polynomials. 
Note that such a simple formula as in Theorem~\ref{thm:pizzettid1} does not holds for general $q\in\mathscr{P}_m$. 
For $\kk=0$, Theorem~\ref{thm:pizzettid1} was established by Bezubik, D\c{a}browska and Strasburger 
\cite[Corollary~2.1]{BDS}. Our proof closely 
follows their proof by using the canonical decomposition, the orthogonality relation, and the relation of 
two inner products on polynomials. For $\kk=0$, Theorem~\ref{thm:pizzettid1}  was also given by 
Estrada \cite[Proposition~3.3]{est0} with a proof similar to that of \cite[Corollary~2.1]{BDS}. 

An extension of Pizzetti's formula follows from 
 Theorem~\ref{thm:pizzettid1} and Taylor's theorem.

\begin{thm}\label{thm:expiz}
Suppose $q\in \mathscr{H}_{\kk,m}$. For a smooth function $f$ on 
a neighbourhood of \,$0\in\mathbb{R}^d$ 
we have 
\begin{equation}\label{eqn:exp1}
\dfrac{1}{\omega_{\kappa, d}} \int_{S^{d-1}} q(y) f(ry) h_{\kappa}^{2}(y) d\sigma(y)
 =\sum_{n=0}^N \frac{(q(\mathscr{D}) \Delta_{\kappa}^{n} f)(0)}{n!\,(\lambda_{\kk}+1)_{m+n}} 
 \left(\dfrac{r}{2}\right)^{m+2n}+o(r^{m+2N})
\end{equation}
as $r\to +0$. 

If $f$ is real analytic in the unit ball of radius 1 around $0\in\mathbb{R}^d$, then 
there exists a constant $\rho\in (0,1)$ such that 
\begin{equation}\label{eqn:exp2}
\dfrac{1}{\omega_{\kappa, d}} \int_{S^{d-1}} q(y) f(ry) h_{\kappa}^{2}(y) d\sigma(y)
 =\sum_{n=0}^\infty \frac{(q(\mathscr{D}) \Delta_{\kappa}^{n} f)(0)}{n!\,(\lambda_{\kk}+1)_{m+n}} 
 \left(\dfrac{r}{2}\right)^{m+2n}
\end{equation}
for any $r$ with $0<r<\rho$. 
\end{thm}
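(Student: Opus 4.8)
The plan is to reduce both statements to the homogeneous-polynomial identity of Theorem~\ref{thm:pizzettid1} by Taylor-expanding $f$ at the origin, and then to dispose of the error term using the compactness of $S^{d-1}$. Fixing $N$, I would apply Taylor's theorem to order $L:=m+2N$, writing
\[
f(x)=\sum_{l=0}^{m+2N}P_l(x)+R(x),\qquad P_l\in\mathscr{P}_l,\quad R(x)=o(\|x\|^{m+2N})\ (x\to 0),
\]
where $P_l$ is the degree-$l$ homogeneous component of the Taylor polynomial. Substituting $x=ry$ with $y\in S^{d-1}$, using $P_l(ry)=r^lP_l(y)$, and integrating term by term, I would apply Theorem~\ref{thm:pizzettid1} to each $\int_{S^{d-1}}q(y)P_l(y)h_\kappa^2(y)\,d\sigma(y)$: the integral vanishes unless $l-m=2n$ is a non-negative even integer, in which case it contributes $\dfrac{r^{m+2n}}{2^{m+2n}\,n!\,(\lambda_\kk+1)_{m+n}}\,q(\mathscr{D})\Delta_\kappa^{n}P_{m+2n}$.

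The step I regard as the crux is the identification $q(\mathscr{D})\Delta_\kappa^{n}P_{m+2n}=(q(\mathscr{D})\Delta_\kappa^{n}f)(0)$. Since each Dunkl operator is homogeneous of degree $-1$ and $\Delta_\kappa$ of degree $-2$, the operator $q(\mathscr{D})\Delta_\kappa^{n}$ is homogeneous of degree $-(m+2n)$ and hence maps $\mathscr{P}_j$ into $\mathscr{P}_{j-m-2n}$. Evaluated at $0$ it therefore annihilates every homogeneous Taylor component of $f$ except the one of degree exactly $m+2n$ (lower components are sent to $0$, higher ones to polynomials vanishing at the origin). Moreover, since each Dunkl operator lowers the order of vanishing at $0$ by at most one, the product $q(\mathscr{D})\Delta_\kappa^{n}$ of $m+2n$ such factors sends the remainder $R$, which vanishes to order greater than $m+2N\ge m+2n$, to a function vanishing at the origin; thus $R$ does not contribute. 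Combining this identification with the term-by-term computation reduces the left-hand side to $\sum_{n=0}^{N}\dfrac{(q(\mathscr{D})\Delta_\kappa^{n}f)(0)}{n!\,(\lambda_\kk+1)_{m+n}}\bigl(\tfrac r2\bigr)^{m+2n}$ plus the error integral $\dfrac{1}{\omega_{\kappa,d}}\int_{S^{d-1}}q(y)R(ry)h_\kappa^2(y)\,d\sigma(y)$. Because $f$ is smooth, the Taylor remainder obeys $\sup_{y\in S^{d-1}}|R(ry)|=o(r^{m+2N})$ uniformly over the compact sphere, while $q\,h_\kappa^2$ is bounded on $S^{d-1}$; hence this integral is $o(r^{m+2N})$, which proves \eqref{eqn:exp1}.

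For the analytic statement \eqref{eqn:exp2}, I would invoke Cauchy-type estimates: real analyticity of $f$ on the open unit ball furnishes constants $M,C>0$ with $\sup_{y\in S^{d-1}}|P_l(y)|\le MC^l$, so that the series $\sum_{l\ge 0}r^lP_l(y)$ converges absolutely and uniformly in $y\in S^{d-1}$ for $0<r<\rho$, where $\rho:=\min\{1,C^{-1}\}$ (shrunk if necessary). Uniform convergence together with the boundedness of $q\,h_\kappa^2$ legitimizes interchanging summation and integration, and applying Theorem~\ref{thm:pizzettid1} termwise, with the same homogeneity identification as above, reassembles the convergent series in \eqref{eqn:exp2}.

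\textbf{Main obstacle.} The genuinely delicate points are the homogeneity identification that converts the polynomial quantity $q(\mathscr{D})\Delta_\kappa^{n}P_{m+2n}$ into the intrinsic $(q(\mathscr{D})\Delta_\kappa^{n}f)(0)$, together with the verification that the Taylor remainder contributes nothing to it, and, in the analytic case, fixing the radius $\rho$ and justifying the interchange of sum and integral; both rest on degree counting for the Dunkl operators and on the compactness of $S^{d-1}$.
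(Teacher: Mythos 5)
Your proposal is correct, and its skeleton is the same as the paper's: Taylor-expand $f$, apply Theorem~\ref{thm:pizzettid1} term by term, identify $q(\mathscr{D})\Delta_\kappa^{n}p_{m+2n}$ with $(q(\mathscr{D})\Delta_\kappa^{n}f)(0)$, and in the analytic case sum to infinity using normal convergence of the Taylor series on a small ball. The one place you genuinely depart from the paper is the identification step, \eqref{eqn:ta}: the paper outsources it to Taylor's formula associated with the Dunkl operators (R\"osler--Voit, Corollary~2.17), whereas you prove it by hand. Your homogeneity argument for the polynomial part is complete, but the remainder part rests on the assertion that ``each Dunkl operator lowers the order of vanishing at $0$ by at most one,'' and this is exactly the nontrivial content hidden in the citation: $\mathscr{D}_\xi$ is not a local differential operator, and its reflection part divides by $\langle\alpha,x\rangle$, so the assertion is not immediate. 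It is true, and can be justified as follows: read ``order of vanishing'' as vanishing of all Taylor coefficients of $R$ at $0$ through degree $M$ (for smooth functions this is equivalent to the pointwise estimate $R=o(\|x\|^{M})$); then $\partial_\xi R$ has vanishing coefficients through degree $M-1$, while $R(x)-R(r_\alpha x)$ has vanishing coefficients through degree $M$ and vanishes identically on the hyperplane $\alpha^{\perp}$, so Hadamard's lemma (smooth divisibility) writes it as $\langle\alpha,x\rangle h(x)$ with $h$ smooth whose Taylor coefficients vanish through degree $M-1$. Note that one must pass through Taylor coefficients here: the pointwise bound $R=o(\|x\|^{M})$ alone does not control $\partial_\xi R$, so a purely ``decay-rate'' reading of your claim would not stand as stated. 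With this lemma supplied, your route buys a self-contained proof independent of the R\"osler--Voit citation, and your explicit uniform estimate of the Taylor remainder over the compact sphere, together with the explicit choice of $\rho$ via Cauchy estimates in the analytic case, makes precise two points the paper leaves implicit.
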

\begin{proof}
By Taylor's theorem $f(x)=\sum_{l=0}^{m+2N} p_l(x)+o(||x||^{m+2N})$ with $p_l\in\mathscr{P}_l$. 
By  Theorem~\ref{thm:pizzettid1}, we have
\begin{align*}
\dfrac{1}{\omega_{\kappa, d}} \int_{S^{d-1}} q(y) & f(ry) h_{\kappa}^{2}(y) d\sigma(y) \\
&=\dfrac{1}{\omega_{\kappa, d}} \sum_{n=0}^{N} r^{m+2n}\int_{S^{d-1}} q(y)p_{m+2n}(y)h_{\kappa}^{2}(y) d\sigma(y)+o(r^{m+2N}) \\
&=\sum_{n=0}^{N}  \frac{q(\mathscr{D})\Delta_{\kk}^n p_{m+2n}}{n!\,(\lambda_{\kk}+1)_{m+n}}\left(\frac{r}{2}\right)^{m+2n}+o(r^{m+2N}).
\end{align*}
If $f\in\mathscr{P}$, then it is clear that 
\begin{equation}
\label{eqn:ta}
q(\mathscr{D})\Delta_{\kk}^n p_{m+2n}=(q(\mathscr{D})\Delta_{\kk}^n f)(0).
\end{equation}
\eqref{eqn:ta} also holds for any smooth function $f$ by Taylor's formula associated with the Dunkl operators 
(cf. \cite[Corollary~2.17]{RV}). Hence \eqref{eqn:exp1} follows. 

Now assume that $f$ is real analytic in the unit ball of radius 1 around $0\in\mathbb{R}^d$. 
Then there exists $\rho\in (0,1)$ such that the multiple Taylor series 
$\sum_{l=0}^{\infty} p_l(x)\,\,\,(p_{l}\in\mathscr{P}_l)$ of $f$ converges normally to $f$ on 
 $\{x\in\mathbb{R}^d\,;\,||x||<\rho\}$. By  Theorem~\ref{thm:pizzettid1}, we have
\begin{align*}
\dfrac{1}{\omega_{\kappa, d}} \int_{S^{d-1}} q(y) & f(ry) h_{\kappa}^{2}(y) d\sigma(y) \\
&=\dfrac{1}{\omega_{\kappa, d}} \sum_{n=0}^\infty r^{m+2n}\int_{S^{d-1}} q(y)p_{m+2n}(y)h_{\kappa}^{2}(y) d\sigma(y) \\
&=\sum_{n=0}^\infty  \frac{q(\mathscr{D})\Delta_{\kk}^n p_{m+2n}}{n!\,(\lambda_{\kk}+1)_{m+n}}\left(\frac{r}{2}\right)^{m+2n}
\end{align*}
for any $r\in (0,\rho)$. 
By Taylor's formula associated with the  Dunkl operators 
(cf. \cite[Corollary~2.17]{RV}), 
 \eqref{eqn:ta} holds for a real analytic function $f$ around a neighbourhood of $0\in\mathbb{R}^d$, 
hence \eqref{eqn:exp2} follows. 
\end{proof}

For $\kk=0$, the above theorem for smooth $f$ is given by Estrada \cite[Theorem~5.1]{est2}. 

Putting $q= 1\in\mathscr{H}_{\kk,0}$ in the above theorem, we have an analogue of 
Pizzetti's formula as a corollary.

\begin{cor}\label{cor:piz}
For a smooth function $f$ on a neighbourhood of $0\in\mathbb{R}^d$ 
we have 
\begin{equation}\label{eqn:piz0}
\dfrac{1}{\omega_{\kappa, d}} \int_{S^{d-1}}  f(ry) h_{\kappa}^{2}(y) d\sigma(y)
 =\sum_{n=0}^N \frac{( \Delta_{\kappa}^{n} f)(0)}{n!\,(\lambda_{\kk}+1)_{n}} 
 \left(\dfrac{r}{2}\right)^{2n}+o(r^{2N})
\end{equation}
as $r\to +0$. 

If $f$ is real analytic in the unit ball of radius 1 around $0\in\mathbb{R}^d$, then 
there exists a constant $\rho\in (0,1)$ such that 
\begin{equation}\label{eqn:piz1}
\dfrac{1}{\omega_{\kappa, d}} \int_{S^{d-1}}  f(ry) h_{\kappa}^{2}(y) d\sigma(y)
 =\sum_{n=0}^\infty \frac{( \Delta_{\kappa}^{n} f)(0)}{n!\,(\lambda_{\kk}+1)_{n}} 
 \left(\dfrac{r}{2}\right)^{2n}, 
\end{equation}
\end{cor}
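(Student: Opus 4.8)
The plan is to read off Corollary~\ref{cor:piz} as the special case $q=1$, $m=0$ of the extended Pizzetti's formula in Theorem~\ref{thm:expiz}. First I would verify that the constant polynomial $1$ is an admissible choice of $q$: it is homogeneous of degree $0$, and since each Dunkl operator $\D_j$ annihilates constants one has $\Delta_\kk 1=0$, so $1\in\H_{\kk,0}$. Hence Theorem~\ref{thm:expiz} applies with $m=0$.

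Next I would substitute $q=1$ into the two conclusions \eqref{eqn:exp1} and \eqref{eqn:exp2}. Because $q$ is the constant polynomial $1$, the operator $q(\D)$ obtained by substituting the Dunkl operators is the identity, so $q(\D)\Delta_\kk^n f=\Delta_\kk^n f$ for every $n$, and on the left-hand side the factor $q(y)$ disappears. Taking $m=0$ then replaces the Pochhammer symbol $(\lambda_\kk+1)_{m+n}$ by $(\lambda_\kk+1)_n$ and the power $(r/2)^{m+2n}$ by $(r/2)^{2n}$. With these substitutions \eqref{eqn:exp1} becomes \eqref{eqn:piz0} and \eqref{eqn:exp2} becomes \eqref{eqn:piz1}, so both the smooth asymptotic statement and the real-analytic convergent statement follow simultaneously.

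I do not expect any genuine obstacle here, since the corollary is a direct specialization of the preceding theorem; the only points that need care are the membership $1\in\H_{\kk,0}$ and the bookkeeping of the parameter choice $m=0$, both of which are immediate.
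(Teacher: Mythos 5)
Your proposal is correct and matches the paper's own derivation exactly: the paper obtains Corollary~\ref{cor:piz} by putting $q=1\in\mathscr{H}_{\kk,0}$ (hence $m=0$) in Theorem~\ref{thm:expiz}, which is precisely your specialization. The verification that $1\in\mathscr{H}_{\kk,0}$ and the bookkeeping with $m=0$ are the only points to check, and you handle both.
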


 Pizzetti's formula  associated with the Dunkl 
operators was  established by \cite{MT, ST}. In \cite{shimeno} we proved 
\eqref{eqn:piz1} for $f\in\mathscr{P}$ as an application of Hobson's formula  associated with the Dunkl 
operators. (Note there is an obvious mistake of unnecessary factor $(-1)^n$ in \cite[Corollary~4.5]{shimeno}.)  
Our proof given here provides an alternative proof of the formula. 
For $\kk=0$
\eqref{eqn:piz1}  gives original Pizzetti's formula (\cite{pizzetti, poritsky, AK, lysik}).

The right hand side of \eqref{eqn:exp2}, which we call the extended Pizzetti series, 
 is related with the Bessel function. 
For $\alpha\geq -1/2$, let $\varphi_{\alpha}$ denote the function defined by 
\[
\varphi_{\alpha}(x)=\varGamma(\alpha+1)\frac{J_\alpha(||x||)}{(||x||/2)^\alpha}
=\varGamma(\alpha+1)\sum_{n=0}^\infty \frac{(-1)^n}{n! \,\varGamma(\alpha+n+1)}\left(\frac{||x||}{2}\right)^{2n}, 
\]
where $J_\alpha$ denote the Bessel function of the first kind. 
The extended Pizzetti series  can be written as
\[
\sum_{n=0}^\infty  \frac{(q(\mathscr{D})\Delta_{\kk}^n f)(0)}{n!\,(\lambda_{\kk}+1)_{m+n}}\left(\frac{r}{2}\right)^{m+2n}
=\frac{1}{(\lambda_\kk)_m}\left(\frac{r}{2}\right)^m (q(\mathscr{D})\varphi_{\lambda_\kk+m}(\sqrt{-1}\mathscr{D}\,r)f)(0).
\]
Here $||\sqrt{-1}\mathscr{D}||^2$ in the series expansion of $\varphi_{\lambda_\kk+m}(\D)$  
is understood to be $-(\mathscr{D}_1^2+\cdots+\mathscr{D}_d^2)=-\Delta_\kk$. 
In particular, the right hand side of \eqref{eqn:piz1}, which we call the Pizzetti series, can be written as
\[
\sum_{n=0}^\infty  \frac{(\Delta_{\kk}^n f)(0)}{n!\,(\lambda_{\kk}+1)_{n}}\left(\frac{r}{2}\right)^{2n}
=(\varphi_{\lambda_\kk}(\sqrt{-1}\mathscr{D}\,r)f)(0).
\]

We have proved Pizzetti's formula (Corollary~\ref{cor:piz}) as a special case of our extended Pizzetti's formula 
(Theorem~\ref{thm:expiz}). 
Conversely, we will show that Theorem~\ref{thm:pizzettid1} and 
Theorem~\ref{thm:expiz} follows from Corollary~\ref{cor:piz} as an application of Hobson's formula. 
We recall  Hobson's formula associated with the Dunkl operators.

\begin{thm}[Hobson's formula \cite{shimeno}]\label{thm:hobson2}
Put $\rho=||x||$. 
If $p\in \mathscr{P}_m$, 
$f_0\in C^\infty((0,\infty))$, and $f(x)=f_0(\rho)$, then
\begin{equation}\label{eqn:hobson}
p(\D) f(x)=\sum_{i=0}^{[m/2]}\frac{1}{2^i i!}
\left[\left(\frac{1}{\rho}\frac{d}{d\rho}\right)^{m-i}\! f_0(\rho)\right] \Delta_\kk^i \,p(x).
\end{equation}
\end{thm}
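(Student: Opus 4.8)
The plan is to prove \eqref{eqn:hobson} by induction on the degree $m$ of $p$, exploiting that the Dunkl operators commute so that $(x_j\tilde p)(\D)=\D_j\,\tilde p(\D)$ for every $\tilde p\in\mathscr{P}$. Since every homogeneous polynomial of degree $m+1$ is a linear combination of polynomials of the form $x_j\tilde p$ with $\tilde p\in\mathscr{P}_m$, and both sides of \eqref{eqn:hobson} are linear in $p$, it suffices to settle the base case $m=0$ (where $p$ is constant and the statement is trivial) and then to carry out the inductive step only for $p=x_j\tilde p$. For that step I would apply $\D_j$ to the formula for $\tilde p(\D)f$ supplied by the induction hypothesis and reorganize the outcome into the asserted expression for $(x_j\tilde p)(\D)f$.

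First I would record a Leibniz-type rule for the action of a single Dunkl operator on the product of a radial function and a polynomial. The key point is that a radial function $g_0(\rho)$ is fixed by every reflection $r_\alpha$, so in \eqref{eqn:dunkl} the difference quotients see only the polynomial factor; a short computation then yields
\begin{equation*}
\D_j\bigl(g_0(\rho)\,p(x)\bigr)=x_j\left[\left(\frac{1}{\rho}\frac{d}{d\rho}\right)g_0\right](\rho)\,p(x)+g_0(\rho)\,\D_j p(x)\qquad(p\in\mathscr{P}).
\end{equation*}
Applying this term by term to the induction hypothesis $\tilde p(\D)f=\sum_{i}\frac{1}{2^i i!}\bigl[(\frac1\rho\frac{d}{d\rho})^{m-i}f_0\bigr]\Delta_\kk^i\tilde p$ produces two sums: one in which the radial factor is differentiated once more, raising the power $m-i$ to $m+1-i$ while retaining $\Delta_\kk^i\tilde p$, and one in which $\D_j$ lands on $\Delta_\kk^i\tilde p$.

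Next I would invoke the commutation relation $[\Delta_\kk,x_j]=2\D_j$, equivalently $\Delta_\kk(x_jg)=x_j\Delta_\kk g+2\D_j g$, whose iterate is
\begin{equation*}
\Delta_\kk^{i}(x_j\tilde p)=x_j\,\Delta_\kk^{i}\tilde p+2i\,\D_j\Delta_\kk^{i-1}\tilde p .
\end{equation*}
Substituting this into the right-hand side of \eqref{eqn:hobson} for $p=x_j\tilde p$ splits that side into precisely the two sums produced above: the $x_j\Delta_\kk^i\tilde p$ part matches the first sum directly, and after the index shift $i\mapsto i+1$ the coefficient $\frac{2(i+1)}{2^{i+1}(i+1)!}$ collapses to $\frac{1}{2^i i!}$, so the $\D_j\Delta_\kk^{i-1}\tilde p$ part matches the second sum. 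The two expressions therefore agree term by term and the induction closes; checking that the summation ranges fit (in particular that the extra top term for odd $m$ is accounted for by $\Delta_\kk^{[m/2]+1}\tilde p=0$) is routine bookkeeping.

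The hard part will be establishing $[\Delta_\kk,x_j]=2\D_j$. Starting from $\D_k x_j=x_j\D_k+\delta_{jk}+T_{jk}$ with $T_{jk}=\sum_{\alpha\in R_+}\frac{2\kk_\alpha\alpha_j\alpha_k}{||\alpha||^2}r_\alpha$, one finds that $[\Delta_\kk,x_j]=2\D_j+\sum_k\bigl(T_{jk}\D_k+\D_k T_{jk}\bigr)$, so the content of the identity is the vanishing of the reflection contribution $\sum_k(T_{jk}\D_k+\D_k T_{jk})$. I would prove this cancellation using the equivariance $r_\alpha\D_\xi r_\alpha=\D_{r_\alpha\xi}$ together with $\D_\alpha r_\alpha=-r_\alpha\D_\alpha$, where $\D_\alpha=\sum_l\alpha_l\D_l$; this is the one genuinely Dunkl-specific computation, the remainder of the argument being formal. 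Alternatively one may quote this standard relation of the Dunkl calculus and regard the reorganization above as the entire proof.
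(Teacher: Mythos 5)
Your proof is correct, but note that the paper itself offers no proof of this theorem to compare against: it is quoted verbatim from the reference \cite{shimeno}, where Hobson's formula for Dunkl operators is established (building on Nomura's treatment \cite{N2, nomura} of the classical case). So your argument should be judged as an independent, self-contained proof, and it holds up. The three ingredients all check out. First, the radial Leibniz rule is valid precisely because $||r_\alpha x||=||x||$, so the difference quotients in \eqref{eqn:dunkl} pass over the radial factor, giving
\begin{equation*}
\D_j\bigl(g_0(\rho)p(x)\bigr)=x_j\Bigl[\tfrac{1}{\rho}\tfrac{d}{d\rho}g_0\Bigr](\rho)\,p(x)+g_0(\rho)\,\D_j p(x) \qquad (x\neq 0).
\end{equation*}
Second, the commutator identity $[\Delta_\kk,x_j]=2\D_j$ is genuinely true: with your $T_{jk}$ the reflection contribution is $\sum_{\alpha\in R_+}\frac{2\kk_\alpha\alpha_j}{||\alpha||^2}\bigl(\D_\alpha r_\alpha+r_\alpha\D_\alpha\bigr)$, and $r_\alpha\D_\alpha r_\alpha=\D_{r_\alpha\alpha}=-\D_\alpha$ (which uses the $G$-invariance of $\kk$) kills each summand; this is the standard $\mathfrak{sl}_2$-type relation of Dunkl calculus, so you could legitimately cite it instead of reproving it. Third, the bookkeeping closes as you say: the coefficient collapse $\frac{2(i+1)}{2^{i+1}(i+1)!}=\frac{1}{2^ii!}$ is right, the possible extra top term for the first sum dies because $\Delta_\kk^{i}\tilde p=0$ once $2i>m$, and the apparent mismatch at the top of the second sum when $m$ is even dies because $\D_j$ annihilates the constant $\Delta_\kk^{m/2}\tilde p$. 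One small point worth making explicit: the identity \eqref{eqn:hobson} is an identity of functions on $\mathbb{R}^d\setminus\{0\}$, and your induction respects this since every radial factor produced stays in $C^\infty((0,\infty))$. What your route buys is a proof using nothing beyond the definition of $\D_\xi$ and its elementary commutation relations, whereas the paper's strategy is to treat Hobson's formula as a black box from \cite{shimeno} and use it (via Proposition~\ref{prop:keyprop}) to rederive the extended Pizzetti formula.
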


\begin{prop}\label{prop:keyprop}
Put $\rho=||x||$. 
Let $m$ and $j$ be non-negative integers and $q\in \mathscr{H}_{m,\kk}$. 
If $j<m$ then $q(\D)r^{2j}=0$, while if $j\geq m$, then 
\[
q(\D)\rho^{2j}=\frac{2^m j!}{(j-m)!}\rho^{2j-2m}q(x).
\]
\end{prop}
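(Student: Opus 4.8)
The plan is to apply Hobson's formula (Theorem~\ref{thm:hobson2}) to the radial function determined by $f_0(\rho)=\rho^{2j}$, taking $p=q$. First I would note that $f(x)=\|x\|^{2j}$ is a polynomial, hence smooth on all of $\mathbb{R}^d$ and in particular $f_0\in C^\infty((0,\infty))$, so the formula is applicable. The decisive simplification is that $q\in\mathscr{H}_{\kk,m}$ is Dunkl harmonic, so $\Delta_\kk q=0$ and therefore $\Delta_\kk^i q=0$ for every $i\geq 1$. Consequently every term with $i\geq 1$ on the right-hand side of \eqref{eqn:hobson} vanishes, and only the $i=0$ term survives:
\[
q(\D)\rho^{2j}=\left[\left(\frac{1}{\rho}\frac{d}{d\rho}\right)^{m}\rho^{2j}\right]q(x).
\]

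It then remains to evaluate the one-dimensional operator $\left(\tfrac{1}{\rho}\tfrac{d}{d\rho}\right)^{m}$ on $\rho^{2j}$. Writing $D=\tfrac{1}{\rho}\tfrac{d}{d\rho}$, a single application gives $D\rho^{2k}=2k\,\rho^{2k-2}$ for any non-negative integer $k$, so $D$ lowers the exponent by $2$ and multiplies by $2k$. Iterating this $m$ times, an immediate induction yields
\[
D^{m}\rho^{2j}=2^{m}\,j(j-1)\cdots(j-m+1)\,\rho^{2j-2m}.
\]
When $j\geq m$ the falling product equals $j!/(j-m)!$, giving exactly the asserted value $\tfrac{2^m j!}{(j-m)!}\rho^{2j-2m}q(x)$. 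When $j<m$ the product $j(j-1)\cdots(j-m+1)$ runs down past $0$ and hence contains the factor $0$ (equivalently, after $j$ applications of $D$ one reaches a constant, which the next application annihilates), so $D^m\rho^{2j}=0$ and thus $q(\D)\rho^{2j}=0$. Both cases of the statement follow at once.

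I do not expect a serious obstacle here: the entire content is the observation that harmonicity of $q$ collapses Hobson's sum to its leading term, after which the result is a routine computation with a radial operator. The only points meriting a line of verification are that $\rho^{2j}$ is genuinely a smooth radial function, so that Theorem~\ref{thm:hobson2} applies, together with the elementary induction for $D^m\rho^{2j}$. A degree count provides a useful consistency check: $q(\D)$ is homogeneous of degree $-m$ and $\rho^{2j}$ of degree $2j$, so both sides are homogeneous of degree $2j-m$, matching $\rho^{2j-2m}q(x)$.
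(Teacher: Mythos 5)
Your proof is correct and follows exactly the paper's argument: the paper also applies Hobson's formula (Theorem~\ref{thm:hobson2}) with $p=q$, uses $\Delta_\kk^i q=0$ for $i\geq 1$ to collapse the sum to the $i=0$ term, and then evaluates $\left(\frac{1}{\rho}\frac{d}{d\rho}\right)^{m}\rho^{2j}$. The only difference is that you spell out the elementary induction and the $j<m$ vanishing, which the paper leaves to the reader with ``hence the results follow.''
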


\begin{proof}
Since $\Delta_\kk^i q=0$ for $i\geq 1$, 
it follows from Theorem~\ref{thm:hobson2} that
\begin{equation}\label{eqn:hcor}
q(\D)\rho^{2j}=\left[\left(\frac{1}{\rho}\frac{d}{d\rho}\right)^m \rho^{2j}\right] q(x),
\end{equation}
hence the results follow. 
\end{proof}

Assume $q\in \mathscr{H}_{\kk,m}$ and let $f(x)=\sum_{i=0}^{2J}p_i+o(||x||^{2J})\,\,(p_i\in\mathscr{P}_i)$ be the 
Taylor expansion. 
By the Pizzetti's formula \eqref{eqn:piz0}, 
\begin{equation}\label{eqn:piz4}
\frac{1}{\omega_{\kk,d}}\int_{S^{d-1}} q(ry)f(ry)h_\kk^2(y)d\sigma(y)=\sum_{j=0}^J \frac{ \Delta_{\kappa}^{j} (qf)(0)}{j!\,(\lambda_{\kk}+1)_{j}} 
 \left(\dfrac{r}{2}\right)^{2j}+o(r^{2J}).
\end{equation}
If $2j<m$, then $\Delta_{\kappa}^{j} (qf)(0)=0$, otherwise,  $\Delta_{\kappa}^{j} (qf)(0)=\Delta_{\kappa}^{j} (qp_{2j-m})$. 
If $2j\geq m$, then
\[
\Delta_{\kappa}^{j} (qp_{2j-m})  = \langle ||x||^{2j},q(x)p_{2j-m}(x)\rangle_\kk 
 =\langle q(\mathscr{D})||x||^{2j},p_{2j-m}(x)\rangle_\kk. 
\]
If $j\geq m$, then by Proposition~\ref{prop:keyprop},
\[
\Delta_{\kappa}^{j} (qp_{2j-m})  =  \frac{2^m j!}{(j-m)!}\langle ||x||^{2j-2m }q(x),p_{2j-m}(x)\rangle_\kk 
 =\frac{2^m j!}{(j-m)!}q(\mathscr{D})\Delta_\kk^{j-m}p_{2j-m}, 
\]
otherwise $\Delta_{\kappa}^{j} (qp_{2j-m})  =0$. 
Thus \eqref{eqn:piz4} become
\begin{align*}
\frac{r^m}{\omega_{\kk,d}}\int_{S^{d-1}} q(y) & f(ry)h_\kk^2(y)d\sigma(y) \\ & =
\sum_{j=m}^{J}\frac{2^m}{(j-m)!\,(\lambda_\kk+1)_j}(q(\mathcal{D})\Delta_\kk^{j-m}f)(0)\left(\frac{r}{2}\right)^{2j}+o(r^{2J}). 
\end{align*}
Hence we have \eqref{eqn:exp1} by putting $j-m=n,\,J=m+N$. 

\begin{rem}
We give a remark on the case of $\kk=0$. 
In this case, Proposition~\ref{prop:keyprop} was proved by Estrada \cite[Proposition~3.2]{est1} by using the canonical decomposition. 
Classical Hobson's formula \cite{Hobson1, N2, nomura} is a special case $\kk=0$ of Theorem~\ref{thm:hobson2}. 
Hobson's formula gives a simpler proof of Proposition~\ref{prop:keyprop} also for $\kk=0$. 
Estrada \cite[Proposition~6.1]{est2} deduced an extended Pizzetti's formula from Pizzetti's formula by 
using Proposition~\ref{prop:keyprop} for $\kk=0$. 

\end{rem}

\section{Applications}
We can prove 
an analogue of 
the Funk-Hecke formula as a corollary of Theorem~\ref{thm:pizzettid1}. 
Our proof follows closely to the proof of Funk-Hecke formula in superspace given by 
De Bie and Sommen~\cite[\S 7]{BS}. 

Let $V_\kk$ denote the Dunkl intertwining operator (cf. \cite[\S 6.5]{DX}). It is a linear operator that is 
uniquely determined by
\[
V_\kk \P_n\subset \P_n\,\,(n\in\mathbb{Z}_{\geq 0}),
\quad V_\kk1=1,\quad \D_\xi V_\kk=V_\kk\partial_\xi \,\,\,(\xi\in\mathbb{R}^d\setminus\{0\}).
\]

If $l-m=2n$ is an even non-negative integer and $q\in \mathscr{H}_m$, then by Theorem~\ref{thm:pizzettid1}, 
\begin{align}
\frac{1}{\omega_{\kk,d}}\int_{S^{d-1}}V_\kk & [\langle x,\cdot\,\rangle^l](y)q(y)h_\kk^2(y)d\sigma(y) \label{eqn:int000}\\
& = \frac{1}{2^{m+2n}n!\,(\lambda_\kk+1)_{m+n}}q(\mathscr{D})\Delta_\kk^n V_\kk[\langle x,\cdot\,\rangle^l] \notag \\
&= \frac{1}{2^{m+2n}n!\,(\lambda_\kk+1)_{m+n}}V_\kk [q(\partial)\Delta^n \langle x,\cdot\,\rangle^l ] \notag \\
& = \frac{(m+2n)!}{2^{m+2n}n!\,(\lambda_\kk+1)_{m+n}}q(x)  \notag 
\end{align}
for $x\in S^{d-1}$. Otherwise the integral in the left hand side of \eqref{eqn:int000} is zero. 

On the other hand, for a continuous function $\varphi$ on $[-1,1]$, define
\begin{equation}\label{eqn:int002}
a_{\kk,m}(\varphi)=\frac{m!}{(2\lambda_\kk)_m B\left(\lambda_\kk+\frac12,\frac12\right)}
\int_{-1}^1 \varphi(t) \,C_m^{\lambda_\kk}(t)(1-t^2)^{\lambda_\kk-\frac12}dt. 
\end{equation}
Here $C_m^\lambda(t)$ is the Gegenbauer polynomial
\[
C_m^\lambda(t)=\frac{(-1)^m (2\lambda)_m}{2^m m! \,(\lambda+\frac12)_m}
(1-t^2)^{\frac12-\lambda}\frac{d^m}{dt^m}(1-t^2)^{\lambda+m-\frac12}
\]
and $B(\,\cdot,\,\cdot\,)$ is the Beta function. 
Then, by integration by parts, 
\[
a_{\kk,m}(t^l)=\frac{\varGamma(\lambda_\kk+1)}{2^m \sqrt{\pi}\varGamma  (\lambda_\kk+m+\frac12)}\int_{-1}^1
\left(\frac{d^m}{dt^m}t^l\right) (1-t^2)^{\lambda_\kk+m-\frac12}dt.
\]
Here we used basic properties of the Beta function, the Gamma function $\varGamma(\,\cdot\,)$, 
and the shifted factorial. 
If $l-m=2n$ is an even non-negative integer, then 
\begin{align}
a_{\kk,m}(t^l)& =\frac{\varGamma(\lambda_\kk+1)}{2^m \sqrt{\pi}\varGamma  (\lambda_\kk+m+\frac12)
}\frac{l!}{(l-m)!}
B\left(n+\frac12,\lambda_\kk+m+\frac12\right) \label{eqn:int003} \\
& =\frac{(m+2n)!}{2^{m+2n}n!\,(\lambda_\kk+1)_{m+n}}
, \notag
\end{align}
otherwise $a_{\kk,m}(t^l)=0$. By \eqref{eqn:int000} and \eqref{eqn:int003}, 
\begin{equation}\label{eqn:int004}
\frac{1}{\omega_{\kk,d}}\int_{S^{d-1}}V_\kk  [\langle x,\cdot\,\rangle^l](y)q(y)h_\kk^2(y)d\sigma(y)
=a_{\kk,m}(t^l)\,q(x)
\end{equation}
for $l,\,m\in \mathbb{Z}_{\geq 0}$, $q\in \mathscr{H}_{\kk,m}$, and $x\in S^{d-1}$. 

By \eqref{eqn:int004} and the Weierstrass approximation theorem, we have the 
following result. 

\begin{cor}[Funk-Hecke formula \cite{xu}, {\cite[Theorem~7.3.4]{DX}}]\label{cor:fh}
Let $\varphi$ be a continuous function on $[-1,1]$ and $q\in\mathscr{H}_{\kk,m}$. Then
\[
\frac{1}{\omega_{\kk,d}}\int_{S^{d-1}}V_\kk [ \varphi(\langle x,\cdot\,\rangle)](y)q(y)h_\kk^2(y)d\sigma(y)
=a_{\kk,m}(\varphi)\,q(x)\quad (x\in S^{d-1}).
\]
\end{cor}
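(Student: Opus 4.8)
The plan is to bootstrap from the polynomial case, which is already settled in \eqref{eqn:int004}, to an arbitrary continuous $\varphi$ by a density argument. First I would note that, for fixed $x\in S^{d-1}$ and $q\in\H_{\kk,m}$, both sides of the asserted identity
\[
\frac{1}{\omega_{\kk,d}}\int_{S^{d-1}}V_\kk[\varphi(\langle x,\cdot\,\rangle)](y)\,q(y)h_\kk^2(y)\,d\sigma(y)=a_{\kk,m}(\varphi)\,q(x)
\]
are linear in $\varphi$: the left-hand side because $V_\kk$ and the integral are linear, and the right-hand side because $a_{\kk,m}$ is defined in \eqref{eqn:int002} as integration of $\varphi$ against a fixed kernel. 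Since \eqref{eqn:int004} gives the identity for every monomial $\varphi(t)=t^l$, linearity immediately yields it for every polynomial $\varphi$ on $[-1,1]$.

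Next I would upgrade from polynomials to continuous functions via the Weierstrass approximation theorem, which requires that both sides be continuous in $\varphi$ with respect to the supremum norm on $[-1,1]$. The right-hand side poses no difficulty: the functional $\varphi\mapsto a_{\kk,m}(\varphi)$ is bounded because the weight $C_m^{\lambda_\kk}(t)(1-t^2)^{\lambda_\kk-\frac12}$ is integrable on $[-1,1]$ (the exponent satisfies $\lambda_\kk-\frac12\ge\frac{d}{2}-\frac32>-1$), so that $|a_{\kk,m}(\varphi)|\le C\,\|\varphi\|_\infty$ for a constant $C$ depending only on $m$ and $\lambda_\kk$.

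The main obstacle is the continuity of the left-hand side, and this is precisely where the positivity of the intertwining operator enters. By the integral representation of $V_\kk$ (see \cite[\S6.5]{DX}) there is, for each $y$, a probability measure $\mu_y$ supported in the convex hull of the orbit $G\cdot y$ with $V_\kk g(y)=\int g\,d\mu_y$. For $y\in S^{d-1}$ every $\xi\in\operatorname{supp}\mu_y$ has $\|\xi\|\le 1$, so for $x\in S^{d-1}$ one has $\langle x,\xi\rangle\in[-1,1]$ and hence
\[
\bigl|V_\kk[\varphi(\langle x,\cdot\,\rangle)](y)\bigr|=\left|\int\varphi(\langle x,\xi\rangle)\,d\mu_y(\xi)\right|\le\|\varphi\|_\infty .
\]
Integrating this bound against $|q(y)|h_\kk^2(y)\,d\sigma(y)$, a finite measure on $S^{d-1}$, shows that the left-hand side is a bounded, hence continuous, linear functional of $\varphi$ as well. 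Finally, given a continuous $\varphi$, I would take polynomials $\varphi_k\to\varphi$ uniformly on $[-1,1]$, apply the polynomial identity to each $\varphi_k$, and pass to the limit on both sides using the two continuity estimates; this produces the formula for $\varphi$ and completes the argument. The only nontrivial analytic input beyond \eqref{eqn:int004} is thus the boundedness of $V_\kk$ furnished by its positivity, which is what makes the Weierstrass approximation legitimate.
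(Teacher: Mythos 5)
Your proposal is correct and takes essentially the same route as the paper: the paper likewise deduces the corollary from the monomial identity \eqref{eqn:int004} combined with the Weierstrass approximation theorem. The continuity estimates you supply --- linearity and boundedness of $a_{\kk,m}$ on one side, and the sup-norm bound $\bigl|V_\kk[\varphi(\langle x,\cdot\,\rangle)](y)\bigr|\le\|\varphi\|_\infty$ furnished by R\"osler's positivity theorem for the intertwining operator on the other --- are precisely the details the paper leaves implicit in its one-line deduction, so your write-up is a legitimate fleshing-out of the same argument rather than a different one.
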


Original proof of the Funk-Hecke formula \cite{xu} (also in the classical case \cite{DaX0}) 
uses the reproducing kernel. 
We did not use the reproducing kernel in our proof. 
By the Funk-Hecke formula, 
we can prove the following reproducing property. 

\begin{cor}[{\cite[Theorem~3.2]{xu97}}]
Define
\[
P_{\kk,n}(x,y)=\frac{n+\lambda_\kk}{\lambda_\kk}V_\kk\!\left[C_n^{\lambda_\kk}(\langle x,\cdot\rangle\right]\!(y).
\]
Then for any $q\in\mathscr{H}_{\kk,m}$, 
\[
\frac{1}{\omega_{\kk,d}}\int_{S^{d-1}}P_{\kk,n}(x,y)q(y)h_\kk^2(y)d\sigma(y)
=\delta_{mn}\,q(x)\quad ( x\in S^{d-1}).
\]
\end{cor}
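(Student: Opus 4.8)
The plan is to feed the Gegenbauer polynomial directly into the Funk--Hecke formula and then evaluate the resulting scalar by the orthogonality of the Gegenbauer polynomials. First I would apply Corollary~\ref{cor:fh} to the continuous function $\varphi(t)=C_n^{\lambda_\kk}(t)$ on $[-1,1]$. Since $P_{\kk,n}(x,y)=\tfrac{n+\lambda_\kk}{\lambda_\kk}V_\kk[C_n^{\lambda_\kk}(\langle x,\cdot\rangle)](y)$, the formula yields
\[
\frac{1}{\omega_{\kk,d}}\int_{S^{d-1}}P_{\kk,n}(x,y)q(y)h_\kk^2(y)\,d\sigma(y)
=\frac{n+\lambda_\kk}{\lambda_\kk}\,a_{\kk,m}\!\left(C_n^{\lambda_\kk}\right)q(x)
\]
for $q\in\mathscr{H}_{\kk,m}$ and $x\in S^{d-1}$. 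Thus the whole statement is reduced to computing the single number $a_{\kk,m}(C_n^{\lambda_\kk})$.

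Next, by the definition \eqref{eqn:int002} of $a_{\kk,m}$, this number is a nonzero constant times $\int_{-1}^1 C_n^{\lambda_\kk}(t)\,C_m^{\lambda_\kk}(t)(1-t^2)^{\lambda_\kk-1/2}\,dt$, which is exactly the pairing of two Gegenbauer polynomials against the weight for which they are orthogonal. Hence the classical orthogonality relation immediately gives $a_{\kk,m}(C_n^{\lambda_\kk})=0$ whenever $m\neq n$, producing the Kronecker delta $\delta_{mn}$.

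It then remains only to verify the normalization in the diagonal case $m=n$. Here I would insert the standard value of the squared Gegenbauer norm, $\int_{-1}^1\big(C_n^{\lambda_\kk}(t)\big)^2(1-t^2)^{\lambda_\kk-1/2}\,dt=\dfrac{\pi\,2^{1-2\lambda_\kk}\varGamma(n+2\lambda_\kk)}{n!\,(n+\lambda_\kk)\,\varGamma(\lambda_\kk)^2}$, expand $(2\lambda_\kk)_n$ and $B(\lambda_\kk+\tfrac12,\tfrac12)$ in terms of the Gamma function, and apply Legendre's duplication formula $\varGamma(2\lambda_\kk)=\pi^{-1/2}2^{2\lambda_\kk-1}\varGamma(\lambda_\kk)\varGamma(\lambda_\kk+\tfrac12)$. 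The factors $n!$, $\varGamma(n+2\lambda_\kk)$, the powers of $2$ and of $\pi$, and the half-integer Gamma factors all cancel, leaving $a_{\kk,n}(C_n^{\lambda_\kk})=\lambda_\kk/(n+\lambda_\kk)$. Multiplying by the prefactor $(n+\lambda_\kk)/\lambda_\kk$ built into $P_{\kk,n}$ gives precisely $1$, and the reproducing identity follows.

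The computation itself is routine, so the main obstacle is really only bookkeeping: keeping the Gamma-function constants straight through the duplication step so that the prefactor cancels exactly rather than up to a stray constant. The one genuinely delicate point is the degenerate value $\lambda_\kk=0$ (which occurs only for $d=2$ with $\kk\equiv0$), where both the prefactor $(n+\lambda_\kk)/\lambda_\kk$ and the Gegenbauer normalization become singular; there I would replace $C_n^{\lambda_\kk}/\lambda_\kk$ by its Chebyshev limit $\lim_{\lambda\to0}\lambda^{-1}C_n^{\lambda}$ and recover the identity by continuity in $\lambda_\kk$.
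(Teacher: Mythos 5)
Your proposal is correct and follows essentially the same route as the paper: apply Corollary~\ref{cor:fh} with $\varphi=C_n^{\lambda_\kk}$ and evaluate $a_{\kk,m}\bigl(C_n^{\lambda_\kk}\bigr)=\delta_{mn}\,\lambda_\kk/(n+\lambda_\kk)$ by the orthogonality relations for the Gegenbauer polynomials. The only differences are that the paper cites the orthogonality relations (norm included) wholesale while you carry out the Gamma-function bookkeeping explicitly (and correctly), and that your remark on the degenerate case $\lambda_\kk=0$ flags an issue the paper leaves untreated.
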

\begin{proof}
Assume $q\in\mathscr{H}_{\kk,m}$. 
By applying Corollary~\ref{cor:fh} for $\varphi(t)=C_n^{\lambda_\kk}(t)$,
\[
\frac{1}{\omega_{\kk,d}}\int_{S^{d-1}}V_\kk\!\left[C_n^{\lambda_\kk}(\langle x,\cdot\rangle\right]\!
(y)q(y)h_\kk^2(y)d\sigma(y)
=a_{\kk,m}(C_n^{\lambda_\kk})\,q(x)\quad (x\in S^{d-1}).
\]
Here
\begin{align*}
a_{\kk,m}(C_n^{\lambda_\kk})& =
\frac{m!}{(2\lambda_\kk)_m B\left(\lambda_\kk+\frac12,\frac12\right)}
\int_{-1}^1  C_n^{\lambda_\kk}(t)\, C_m^{\lambda_\kk}(t)(1-t^2)^{\lambda_\kk-\frac12}dt \\
& =\delta_{mn}\frac{\lambda_\kk}{n+\lambda_\kk}
\end{align*}
by the orthogonality relations for the Gegenbauer polynomials (cf. \cite[\S 1.4.3]{DX}). 
\end{proof}

\bigskip


\begin{thebibliography}{Schl1}
\bibitem{AK}D.H. Armitage and \"U Kuran, \emph{The convergence of the Pizzetti series in potential theory}, J. Math. Anal. Appl., 
\textbf{171} (1992), 516--531.

\bibitem{ST}N. Ben Salem, K. Touahri, \emph{Pizzetti series and polyharmonicity associated with the Dunkl Laplacian}, 
Mediterr. J. Math., \textbf{7} (2010), 455--470.


\bibitem{BDS} A. Bezubik, A. D\c{a}browska, and A. Strasburger, 
\emph{On spherical expansions of zonal functions on Euclidean spheres}, Arch. Math., \textbf{90} (2008), 70--81.

\bibitem{DaX0}F. Dai and Y. Xu, 
\emph{Approximation Theory and Harmonic Analysis on Spheres and Balls}, Springer, 2013.

\bibitem{DaX} F. Dai and Y. Xu, \emph{Analysis on $h$-Harmonics and Dunkl Transforms}, Birkh\"auser, 2014.

\bibitem{BS}H. De Bie, F. Sommen, \emph{Spherical harmonics and integration in superspace}, 
J. Phys. A: Math. Theor.,  \textbf{40} (2007), 7193--7212.


\bibitem{Dunkl3} C.F. Dunkl, \emph{Reflection groups and orthogonal polynomials on the sphere}, Math. Z.,  
\textbf{197} (1988), 33--60.

\bibitem{DX}C.F. Dunkl and Y. Xu, \emph{Orthogonal Polynomials of Several Variables} Second. Ed., 
Cambridge University Press, 2014.

\bibitem{est0}R. Estrada, \emph{Regularization and derivatives of multipole potentials}, J. Math. Anal. Appls. 
\textbf{446} (2017), 770--785.

\bibitem{est1} R. Estrada, \emph{Products of harmonic polynomials and delta functions}, 
Advances in Analysis \textbf{3} (2018), 23--27.

\bibitem{est2}R. Estrada, \emph{On Pizzetti's formula}, Asymptot. Anal., \textbf{111} (2019), 1--14.

\bibitem{Hobson1}E.W. Hobson, \emph{On a theorem in the differential calculus}, 
Messenger Math., \textbf{23} (1894), 115--119. 

\bibitem{lysik}G. \L ysik, \emph{Mean-value properties of real analytic functions}, 
Arch. Math. \textbf{98} (2012), 61--70.

\bibitem{MT}H. Mejjaoli, K. Trim\`eche, \emph{Mean value property associated with the 
Dunkl Laplacian}, Integral Transform. Spec. Funct., \textbf{12} (2001), 279--302.

\bibitem{N2}T. Nomura, \emph{A proof of Hobson's formula with the Euler operator}, Kyushu J. Math., 
\textbf{72} (2018), 423--427.

\bibitem{nomura}T. Nomura, \emph{Spherical Harmonics and Group Representations} (in Japanese), 
Nippon Hyoron sha co., Ltd., 2018.

\bibitem{pizzetti}P. Pizzetti, 
\emph{Sulla media dei valori che una funzione dei punti dello spazio assume alla superficie di una sfera}, 
Rend. Reale Accad. Lincei, \textbf{18} (1909), 182--185.

\bibitem{poritsky}H. Poritsky, \emph{Generalizations of the Gauss law of the spherical mean}, 
Trans. Amer. Math. Soc. \textbf{43} (1938), 199--225.

\bibitem{R}M. R\"osler, \emph{Dunkl Operators: Theory and Applications}, 
In: \emph{Orthogonal
polynomials and special functions}, Leuven 2002, Springer Lecture Notes
in Math., \textbf{1817}, 2003, 93--135. 

\bibitem{RV}M. R\"osler, M. Voit, 
\emph{Dunkl theory, convolution algebras, and related Markov processes},  In:
\emph{Harmonic and Stochastic analysis of Dunkl processes}; eds. P. Graczyk, M. R\"osler, M. Yor, 
Hermann, 2008, 1-112.

\bibitem{shimeno} N. Shimeno, \emph{Hobson's formula for Dunkl operators and its applications}, 
Integr. Transf. Spec. F., \textbf{29} (2018), 842--851.

\bibitem{tani}N. Tani, \emph{Harmonic projection operators associated with the Dunkl Laplacian and their applications} 
(in Japanese), Master thesis, Kwansei Gakuin University, 2019.

\bibitem{xu97}Y. Xu, \emph{Integration of the intertwining operator for $h$-harmonic 
polynomials associated to reflection groups}, Proc. Amer. Math. Soc., \textbf{125} (1997), 2963--2973. 

\bibitem{xu}Y. Xu, \emph{Funk-Hecke Formula for orthogonal polynomials on spheres and on balls}, 
Bull. London Math. Soc., \textbf{32}  (2000), 447--457.

\end{thebibliography}
\end{document}